\newtheorem{theorem}{{\sc Theorem}}[section]
\newtheorem{lemma}[theorem]{{\sc Lemma}}
\newtheorem{remark}[theorem]{Remark}
\newcommand{\cof}{\mathrm{cof}}
\def\XXint#1#2#3{{\setbox0=\hbox{$#1{#2#3}{\int}$ }
\vcenter{\hbox{$#2#3$ }}\kern-.6\wd0}}
\bmdefine\BGa{\alpha}
\bmdefine\BGb{\beta}
\bmdefine\BGd{\delta}
\bmdefine\BGe{\epsilon}
\bmdefine\BGve{\varepsilon}
\bmdefine\BGf{\phi}
\bmdefine\BGvf{\varphi}
\bmdefine\BGg{\gamma}
\bmdefine\BGc{\chi}
\bmdefine\BGi{\iota}
\bmdefine\BGk{\kappa}
\bmdefine\BGl{\lambda}
\bmdefine\BGn{\eta}
\bmdefine\BGm{\mu}
\bmdefine\BGv{\nu}
\bmdefine\BGp{\pi}
\bmdefine\BGth{\theta}
\bmdefine\BGvth{\vartheta}
\bmdefine\BGr{\rho}
\bmdefine\BGvr{\varrho}
\bmdefine\BGs{\sigma}
\bmdefine\BGvs{\varsigma}
\bmdefine\BGt{\tau}
\bmdefine\BGj{\tau}
\bmdefine\BGu{\upsilon}
\bmdefine\BGo{\omega}
\bmdefine\BGx{\xi}
\bmdefine\BGy{\psi}
\bmdefine\BGz{\zeta}
\bmdefine\BGD{\Delta}
\bmdefine\BGF{\Phi}
\bmdefine\BGG{\Gamma}
\bmdefine\BGL{\Lambda}
\bmdefine\BGP{\Pi}
\bmdefine\BGT{\Theta}
\bmdefine\BGS{\Sigma}
\bmdefine\BGU{\Upsilon}
\bmdefine\BGO{\Omega}
\bmdefine\BGX{\Xi}
\bmdefine\BGY{\Psi}
\bmdefine\BCA{{\mathcal A}}
\bmdefine\BCB{{\mathcal B}}
\bmdefine\BCC{{\mathcal C}}
\bmdefine\BCD{{\mathcal D}}
\bmdefine\BCE{{\mathcal E}}
\bmdefine\BCF{{\mathcal F}}
\bmdefine\BCG{{\mathcal G}}
\bmdefine\BCH{{\mathcal H}}
\bmdefine\BCI{{\mathcal I}}
\bmdefine\BCJ{{\mathcal J}}
\bmdefine\BCK{{\mathcal K}}
\bmdefine\BCL{{\mathcal L}}
\bmdefine\BCM{{\mathcal M}}
\bmdefine\BCN{{\mathcal N}}
\bmdefine\BCO{{\mathcal O}}
\bmdefine\BCP{{\mathcal P}}
\bmdefine\BCQ{{\mathcal Q}}
\bmdefine\BCR{{\mathcal R}}
\bmdefine\BCS{{\mathcal S}}
\bmdefine\BCT{{\mathcal T}}
\bmdefine\BCU{{\mathcal U}}
\bmdefine\BCV{{\mathcal V}}
\bmdefine\BCW{{\mathcal W}}
\bmdefine\BCX{{\mathcal X}}
\bmdefine\BCY{{\mathcal Y}}
\bmdefine\BCZ{{\mathcal Z}}
\bmdefine\Bzr{ 0}
\bmdefine\Ba{ a}
\bmdefine\Bb{ b}
\bmdefine\Bc{ c}
\bmdefine\Bd{ d}
\bmdefine\Be{ e}
\bmdefine\Bf{ f}
\bmdefine\Bg{ g}
\bmdefine\Bh{ h}
\bmdefine\Bi{ i}
\bmdefine\Bj{ j}
\bmdefine\Bk{ k}
\bmdefine\Bl{ l}
\bmdefine\Bm{ m}
\bmdefine\Bn{ n}
\bmdefine\Bo{ o}
\bmdefine\Bp{ p}
\bmdefine\Bq{ q}
\bmdefine\Br{ r}
\bmdefine\Bs{ s}
\bmdefine\Bt{ t}
\bmdefine\Bu{ u}
\bmdefine\Bv{ v}
\bmdefine\Bw{ w}
\bmdefine\Bx{ x}
\bmdefine\By{ y}
\bmdefine\Bz{ z}
\bmdefine\BA{ A}
\bmdefine\BB{ B}
\bmdefine\BC{ C}
\bmdefine\BD{ D}
\bmdefine\BE{ E}
\bmdefine\BF{ F}
\bmdefine\BG{ G}
\bmdefine\BH{ H}
\bmdefine\BI{ I}
\bmdefine\BJ{ J}
\bmdefine\BK{ K}
\bmdefine\BL{ L}
\bmdefine\BM{ M}
\bmdefine\BN{ N}
\bmdefine\BO{ O}
\bmdefine\BP{ P}
\bmdefine\BQ{ Q}
\bmdefine\BR{ R}
\bmdefine\BS{ S}
\bmdefine\BT{ T}
\bmdefine\BU{ U}
\bmdefine\BV{ V}
\bmdefine\BW{ W}
\bmdefine\BX{ X}
\bmdefine\BY{ Y}
\bmdefine\BZ{ Z}
\begin{document}
\title{A note on the extreme points of the cone of quasiconvex quadratic forms with orthotropic symmetry}
\author{Davit Harutyunyan\thanks{University of California Santa Barbara, harutyunyan@ucsb.edu}}
\maketitle

\begin{abstract}
We study the extreme points of the cone of quasiconvex quadratic forms with linear elastic orthotropic symmetry. We prove that if the determinant of the acoustic matrix of the associated forth order tensor of the quadratic form is an extremal polynomial, then the quadratic form is an extreme point of the cone in the same symmetry class. The extremality of polynomials and quadratic forms here is understood in classical convex analysis sense. 
\end{abstract}

\textbf{Keywords:}\ \   Extremal quasiconvex quadratic forms; Quasiconvexity; Orthotropic symmetry; Positive maps; Semidefinite biquadratic forms

\section{Introduction}
\setcounter{equation}{0}
\label{sec:1}

\textbf{Quasiconvexity and Rank-one convexity.} Quasiconvexity has been a central concept in applied mathematics since the work of Morrey [\ref{bib:Morrey.1},\ref{bib:Morrey.2}]. It is tied with existence of minimizers of integral functionals with the Lagranagian satisfying certain type of growth conditions [\ref{bib:Ball},\ref{bib:Dacorogna}]. One of the equivalent definitions of quasiconvexity reads as follows: \textit{Assume $n,N\in \mathbb N$ and $f\colon \mathbb R^{N\times n}\to\mathbb R.$ If the function $f$ is Borel measurable and locally bounded, then it is said to be quasiconvex, if}

\begin{equation}
\label{1.1}
f(\bm{\xi})\leq \int_{[0,1]^n}f(\bm{\xi}+\nabla\varphi(x))dx,
\end{equation}
\textit{for all matrices $\bm{\xi}\in\mathbb R^{N\times n}$ and all functions $\varphi\in W_0^{1,\infty}([0,1]^n,\mathbb R^N). $}
The condition of rank-one convexity occurs naturally in the second variation of an integral functional 
$$\int_{D}L(\nabla u(x))dx,$$
reducing to a pointwise condition on the Hessian of the Lagrangian $L$; the condition is known as the Legendre-Hadamard condition [\ref{bib:VanHove.1},\ref{bib:VanHove.2},\ref{bib:Serre},\ref{bib:Dacorogna}]. The condition of rank-one-convexity reads as follows: \textit{Assume $n,N\in \mathbb N$ and $f\colon \mathbb R^{N\times n}\to\mathbb R.$ If the function $f$ is Borel measurable and locally bounded, then it is said to be rank-one-convex, if}

\begin{equation}
\label{1.2}
f(\lambda\BA+(1-\lambda)\BB)\leq \lambda f(\BA)+(1-\lambda)f(\BB),
\end{equation}
\textit{for all $\lambda\in[0,1]$ and all matrices $\BA,\BB\in\mathbb R^{N\times n}$ such that $\mathrm{rank}(\BA-\BB)\leq 1.$ }
For $C^2$ functions $f$ the condition (\ref{1.2}) is equivalent to the Legandre-Hadamard condition mentioned above and is given by 

\begin{equation}
\label{1.3}
\sum_{\substack{0\leq\alpha,\gamma\leq N \\ 0\leq \beta,\delta\leq n}}\frac{\partial^2 f(\bm{\xi})}{\partial\xi_{\alpha\beta}\partial\xi_{\gamma\delta}}x_\alpha y_\beta x_\gamma y_\delta\geq 0,
\end{equation}
for all $\Bx=(x_1,x_2,\dots,x_N)\in\mathbb R^N$ and $\By=(x_1,x_2,\dots,x_n)\in\mathbb R^n.$
A special choice of the test function $\varphi$ in (\ref{1.1}) proves that in fact quasiconvexity implies rank-one convexity [\ref{bib:Dacorogna}]. 
It is know that when the function $f\colon\mathbb R^{N\times n}\to\mathbb R$ is a quadratic form, then the quasiconvexity and rank-one-convexity of $f$ become equivalent conditions [\ref{bib:Dacorogna}]. For a quadratic form $f(\bm{\xi})$ defined on $N\times n$ matrices, i.e., $f\colon\mathbb R^{N\times n}\to\mathbb R,$ the quasiconvexity of $f$ is equivalent to the condition 
$f(\bm{\xi})\geq 0$ for all $\bm{\xi}\in M^{N\times n}$ such that $\mathrm{rank}(\bm{\xi})\leq 1,$ which amounts to the inequality 
\begin{equation}
\label{1.4}
f(\Bx\otimes\By)\geq 0,\quad\text{for all vectors} \quad \Bx\in\mathbb R^N, \By\in\mathbb R^n.
\end{equation}
\textbf{Positive semidefinite biquadratic forms, quasiconvex quadratic forms: historical facts.} It is clear that the expression for $f(\Bx\otimes\By)$ in (\ref{1.4}) is a biquadratic form in the variables $x_i,$ $i=1,2,\dots,N$ and $y_j$, $j=1,2,\dots,n.$ Biquadratic forms $f$ satisfying the condition (\ref{1.4}) are called positive semidefinite in linear algebra. Thus any quasiconvex quadratic form can be regarded as a positive semidefinite biquadratic form and vice versa. Interestingly enough, the study of nonnegative biquadratic forms has been of 
interest of two mathematical communities: 1. Applied Mathematics/Calculus of Variations, where as mentioned above one adopts the language of quasiconvex forms coming from minimization problems in the calculus of variations and applications in applied mathematics. 2. Linear Algerba/Algebraic Geometry, where one uses the language of positive semidefinite biquadratic forms, and  the interest seems to be purely mathematical with some deep consequences. For the sake of brevity, we will conventionally number those communities first and second. While quasiconvex quadratic forms gained tremendous interest after the work of Morrey [\ref{bib:Morrey.1}] in 1952, the study of positive semidefinite biquadratic forms has started as early as the work of Hilbert [\ref{bib:Hilbert}]. In 1888 Hilbert raised the question of whether any nonnegative polynomial over reals can be expressed as a sum of squares of rational functions, which was solved in affirmative by Artin in 1927, [\ref{bib:Artin}]. Hilbert also showed that any nonnegative homogeneous polynomial of fourth degree in three variables can be expressed as a sum of squares of second order homogeneous polynomials [\ref{bib:Hilbert}]. Denote the convex cone of nonnegative biquadratic forms $f(\Bx\otimes\By)$ by $\cal {C}$$_{N,n},$ where $\Bx\in\mathbb R^N,$ and 
$\By\in\mathbb R^n$. While the second community has been studying general classical questions on nonnegative biquadratic forms, such as extreme points of $\cal {C}$$_{N,n}$
 [\ref{bib:Choi},\ref{bib:Cho.Lam.}], separability and inseparability of positive linear maps [\ref{bib:Stormer}], etc., the first community has been mostly interested in certain questions coming from applications in materials science, among which is the study of quasiconvex and polyconvex quadratic forms and whether they are different in general. Recall that given $n,N\in\mathbb N,$ a function $f\colon\mathbb R^{N\times n}\to\mathbb R$ is called polyconvex if there exists a convex function $g\colon\mathbb R^k\to\mathbb R,$ such that $f(\bm{\xi})=g(m_1(\bm{\xi}),m_2(\bm{\xi}),\dots,m_k(\bm{\xi})),$ where $k$ is the number of all possible minors of the matrix $\bm{\xi}\in\mathbb M^{N\times n},$ and $m_i(\bm{\xi})$ are all minors of $\bm{\xi},$ for $i=1,2,\dots,k.$ Unfortunately there has been little communication between the two communities mentioned above; below are some historical facts that are worth mentioning. As mentioned above, a quadratic form $f\colon R^{N\times n}\to\mathbb R$ is quasiconvex iff (\ref{1.4}) is satisfied. Also, it has long been known that $f(\bm{\xi})$ is polyconvex if it can be written as a sum a convex form and a linear combination of the $2\times 2$ minors of $\bm{\xi}.$ Van Hove [\ref{bib:VanHove.1},\ref{bib:VanHove.1}] proved in the late 1940's that for $n,N\geq 3,$ there exist quasiconvex quadratic forms $f\colon R^{N\times n}\to\mathbb R$ that are not polyconvex. It has been believed in the first community that the first explicit example of such a quadratic form was provided by Serre [\ref{bib:Serre}] in 1980, given by 
\begin{equation}
\label{1.5}
f(\bm{\xi})=(\xi_{11}-\xi_{23}-\xi_{32})^2+(\xi_{12}-\xi_{31}+\xi_{13})^2+(\xi_{21}-\xi_{13}-\xi_{31})^2+\xi_{22}^2+\xi_{33}^2-\epsilon |\bm{\xi}|^2,
\end{equation}
which is quasiconvex but not polyconvex for small enough $\epsilon>0.$ However, as a referee\footnote{Evidently from the second community} of the current paper pointed out, the first such explicit example is due to Choi [\ref{bib:Choi}] delivered in 1975:
\begin{equation}
\label{1.6}
f(\bm{\xi})=\xi_{11}^2+\xi_{22}^2+\xi_{33}^2-2(\xi_{11}\xi_{22}+\xi_{22}\xi_{33}+\xi_{33}\xi_{11})+2(\xi_{12}^2+\xi_{23}^2+\xi_{31}^2),
\end{equation}
which is very well known in the second community. Another interesting fact is that shortly after Choi's paper [\ref{bib:Choi}] appeared, Choi and Lam [\ref{bib:Cho.Lam.}] provided 
an example of a quasiconvex quadratic form (for $n=N=3$), that is en extreme point of the convex cone $\cal {C}$$_{3,3}$ in classical convex analysis sense:
\begin{equation}
\label{1.7}
f(\bm{\xi})=\xi_{11}^2+\xi_{22}^2+\xi_{33}^2-2(\xi_{11}\xi_{22}+\xi_{22}\xi_{33}+\xi_{33}\xi_{11})+\xi_{12}^2+\xi_{23}^2+\xi_{31}^2.
\end{equation}
While the Choi-Lam form in (\ref{1.7}) is widely known within the second community as the first nontrivial\footnote{Certainly the square of a linear form is a trivial example of such} explicit example of an extreme point of $\cal {C}$$_{3,3}$, is was not known in the first community until a referee of the current article pointed it out, and it was rediscovered by the author and Milton in 
[\ref{bib:Har.Mil.1}].\\ 
\textbf{Extremal quasiconvex quadratic forms, extremals in the sense of Milton.} In the present work we continue the study of the so called \textit{extremal quasiconvex quadratic forms} introduced by Milton in [\ref{bib:Milton.1}]. Those are quasiconvex quadratic forms that lose the quasiconvexity property whenever a convex quadratic form is subtracted from them. Let us highlight, that 
these and the extreme points (rays) of $\cal {C}$$_{N,n}$ are different in general, as for instance the square of a linear form in $\bm{\xi}$ is an extremal ray of $\cal {C}$$_{N,n}$ 
in classical convex analysis sense, whereas it is not an extremal in the sense of Milton. Another trivial fact is that if the biquadratic form $f(\Bx\otimes\By)$ is not a 
perfect square and is an extreme ray of the cone, then the associated quadratic form $f(\bm{\xi})$ will be an extremal in the sense of Milton. A related interesting question that seems to be open is the following: \textit{is there a quasiconvex quadratic form that is an extremal in the sense of Milton, but fails to be an extremal in classical convex analysis sense?}
 Milton showed in [\ref{bib:Milton.3}], that such extremal forms (in the sense of Milton) may be used to derive new bounds on the effective properties of composites using the translation method, as done for the gradient problem in  [\ref{bib:Mur.Tar.},\ref{bib:Tartar},\ref{bib:All.Koh.},\ref{bib:Che.Gib.1},\ref{bib:Che.Gib.2},\ref{bib:Koh.Lip.},\ref{bib:Milton.2},\ref{bib:Cherkaev},\ref{bib:Mil.Ngu.},\ref{bib:Kan.Kim.Mil.},\ref{bib:Kan.Mil.Wan.}] using Null-Lagrangians. Here, Null-Lagrangians are the linear combinations of second order minors of the argument matrix $\bm{\xi}$ of the function $f\colon\mathbb R^{N\times n}\to\mathbb R.$ In the work [\ref{bib:Kan.Mil.}], Kang and Milton use special type of extremals to obtain bounds on the volume fraction of two materials in three dimensions. In [\ref{bib:Milton.3}], Milton introduced a generalization of the notion of quasiconvexity, where extremals start to play a key role. The idea is to consider the quasiconvexity inequality (\ref{1.1}) with $\cal{F}(\varphi)$ instead of $\nabla\varphi,$ where $\cal{F}$ is in general a differential operator (it is the gradient operator in the case of standard quasiconvexity). Also, the admissible vector fields $\varphi$ may be required to satisfy some partial-differential relation, such as having $\cal{F}(\varphi)=\nabla\varphi$ with the additional condition that $\mathrm{div}(\varphi)=0$ for $N=n.$ Then, as suggested by Milton [\ref{bib:Milton.3}], the extremals may replace the Null-Lagrangians for general operators $\cal{F}$ and differential constraints imposed on $\varphi.$
Of course when considering the quasicovexity property of quadratic forms, a quadratic form $Q(\bm{\xi})$ ($\bm{\xi}\in\mathbb R^{N\times n}$) is considered modulo Null-Lagrangians, as in the rank-one-convexity condition (\ref{1.4}) Null-Lagrangians simply vanish and do not play a role. Considering quadratic forms with linear elastic orthotropic symmetry, our work [\ref{bib:Har.Mil.2}] reveals a link between such extremal forms and extremal polynomials, suggesting that the question of extremality of a quadratic form $Q(\bm{\xi})$ is closely related with the extremality of the determinant of the so called acoustic matrix (or the $\By-$matrix) of the biquadratic form $Q(\Bx\otimes\By),$ which is the matrix $T(\By),$ where $Q(\Bx\otimes\By)=\Bx T(\By)\Bx^T.$ This fact (connection) seems to have been unknown also in the second community. Recall that in classical convex analysis, a homogeneous nonnegative polynomial $P$ is called an extremal, if it loses the non-negativity property when another nonnegative homogeneous polynomial of the same degree and other than a scalar multiple of $P$ is subtracted from it. The later work [\ref{bib:Har.Mil.3}] goes deeper and proves that the condition found in [\ref{bib:Har.Mil.2}] applies to any quadratic forms (not necessarily with orthotropic symmetry), see 
[\ref{bib:Har.Mil.3}, Theorems~2.4--2.7]. For the convenience of the reader we recall/formulate Theorem~2.5 of [\ref{bib:Har.Mil.3}].
\begin{theorem}
\label{th:1.1}
Assume $Q(\bm{\xi})\colon\mathbb R^{3\times 3}\to\mathbb R$ is a quasiconvex quadratic form such that $\det(T(\By))$ is an extremal polynomial that is not a perfect square, where $Q(\Bx\otimes\By)=\Bx T(\By)\Bx^T.$ Then $Q(\bm{\xi})$ is an extremal (in the sense of Milton, mentioned above). 
\end{theorem}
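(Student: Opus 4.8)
The plan is to fix a convex quadratic form $C$ for which $Q-C$ is still quasiconvex and to show $C\equiv 0$; since the only convex Null-Lagrangian is the zero form, this is exactly the statement that $Q$ is extremal in Milton's sense. Writing $C(\Bx\otimes\By)=\Bx\,S(\By)\,\Bx^{T}$ with $S(\By)$ symmetric and quadratic in $\By$, the acoustic matrix of $Q-C$ is $T(\By)-S(\By)$, and quasiconvexity of both $Q$ and $Q-C$ together with convexity of $C$ gives
$$0\preceq S(\By)\preceq T(\By)\qquad(\By\in\mathbb R^{3}).$$
Consequently $T(\By)-tS(\By)\succeq 0$ for every $t\le 1$ (a convex combination of $T$ and $T-S$ if $t\in[0,1]$, a sum of positive semidefinite matrices if $t\le 0$), while $T-tS\preceq T$ for $t\in[0,1]$, so $0\le\det(T-tS)\le\det T$ pointwise on $[0,1]$ by monotonicity of the determinant on the positive semidefinite cone. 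Applying the definition of extremality of the sextic $\det T(\By)$ to the two nonnegative forms $\det(T-tS)$ and $\det T-\det(T-tS)$ yields $\det(T(\By)-tS(\By))=\lambda(t)\det T(\By)$ for a scalar $\lambda(t)$, valid for $t\in[0,1]$ and hence (polynomial identity) for all $t$; expanding the left side as a cubic in $t$ via the $3\times3$ determinant-of-a-sum formula and matching coefficients gives
$$\Trc\big(\adj(T)S\big)=a\,\det T,\qquad \Trc\big(T\,\adj(S)\big)=b\,\det T,\qquad \det S=c\,\det T,$$
identities of polynomials in $\By$ with constants $a,b,c$.

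Next I would restrict to the complement of $Z=\{\det T=0\}$, where $T(\By)$ is positive definite, and set $M(\By)=T^{-1/2}S\,T^{-1/2}$, so $0\preceq M\preceq I$. Since $\det(S-\mu T)=\det(T)\det(M-\mu I)$, the three identities above say precisely that the characteristic polynomial of $M(\By)$ equals $\mu^{3}-a\mu^{2}+b\mu-c$, independent of $\By$; hence $M(\By)$ has a constant spectrum $\mu_{1}\le\mu_{2}\le\mu_{3}$ in $[0,1]$. Suppose first $\mu_{1}=\mu_{2}=\mu_{3}=:\mu_{0}$. Then $M\equiv\mu_{0}I$, so $S\equiv\mu_{0}T$ and $C(\Bx\otimes\By)=\mu_{0}\,Q(\Bx\otimes\By)$; thus $C-\mu_{0}Q$ vanishes on all rank-one matrices, so it is a linear combination of $2\times2$ minors, a Null-Lagrangian $L$, and $C=\mu_{0}Q+L$. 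At this point the hypotheses re-enter: if $\mu_{0}>0$ then $\mu_{0}Q+L\succeq 0$ would exhibit $Q$ as polyconvex; but $Q$ is not polyconvex, because $Q=C_{0}+(\text{Null-Lagrangian})$ with $C_{0}$ convex forces, after writing $C_{0}$ as a sum of squares of linear forms and restricting to rank-one matrices, $T(\By)=V(\By)V(\By)^{T}$ with $V$ a $3\times m$ matrix of linear forms, whence $\det T=\sum_{|\sigma|=3}(\det V_{\sigma})^{2}$ by the Cauchy--Binet formula, and then extremality of $\det T$ collapses this sum of squares of cubics to a single square, contradicting the assumption that $\det T$ is not a perfect square. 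Therefore $\mu_{0}=0$, so $C=L$ is a convex Null-Lagrangian; writing $L$ as a sum of squares of linear forms, each such form vanishes on the spanning set of rank-one matrices and is hence $\equiv 0$, so $C\equiv 0$.

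It remains to rule out the case in which the constant spectrum of $M(\By)$ is \emph{not} a single value, and this is the step I expect to be the main obstacle, and the place where "$\det T$ is not a perfect square" is genuinely needed. If $\mu_{\min}<\mu_{\max}$, then $A:=S-\mu_{\min}T$ and $B:=\mu_{\max}T-S$ are positive semidefinite polynomial matrices that are rank-deficient at every $\By$ (each annihilates the pulled-back extreme eigenvector of $M$), so $\det A\equiv\det B\equiv 0$ and $\adj A,\adj B$ have rank at most one everywhere. Substituting $T=(\mu_{\max}-\mu_{\min})^{-1}(A+B)$ into the $3\times3$ identity $\det(A+B)=\det A+\det B+\Trc(\adj(A)B)+\Trc(A\,\adj(B))$ — in which every surviving term is a nonnegative sextic — and invoking extremality of $\det T$ once more, the plan is to peel a rank-one, necessarily \emph{linear}, summand $\Bq(\By)\Bq(\By)^{T}$ off $T(\By)$ with $\det(T-\Bq\Bq^{T})\equiv 0$, and then iterate the reduction on the residual positive semidefinite polynomial matrix, whose rank has strictly dropped. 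After finitely many steps this expresses $T(\By)=W(\By)W(\By)^{T}$ for a polynomial matrix $W$, so that $\det T=(\det W)^{2}$ is a perfect square — the desired contradiction. The technical heart of this last step, and the hardest part of the whole argument, is to show that the rank-one pieces extracted this way really are polynomial (indeed linear) rather than merely algebraic: this is precisely where the positivity of $A$ and $B$, the identical vanishing of the relevant determinants, and the extremality of the sextic $\det T$ must be combined, and where one must carefully track common polynomial factors of the entries of $T$.
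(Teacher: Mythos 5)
The first half of your argument is sound and close in spirit to how this circle of results is actually handled: the normalization $0\preceq S(\By)\preceq T(\By)$, the use of extremality of the sextic $\det T$ to force $\det(T-tS)=\lambda(t)\det T$, the resulting constancy of the characteristic polynomial of $M(\By)=T^{-1/2}ST^{-1/2}$ off $\{\det T=0\}$, and the disposal of the scalar case $M\equiv\mu_0 I$ via Cauchy--Binet together with the ``not a perfect square'' hypothesis are all correct (note only that $\det T\not\equiv 0$ is needed there, which is guaranteed since $0$ is a perfect square). The problem is that what you leave open is not a residual technicality but the entire substance of the theorem. Extremality in Milton's sense only needs to be tested against a single square $C=(l(\bm\xi))^2$: if $Q-\sum_i c_i l_i^2\geq 0$ on rank-one matrices then already $Q-c_1l_1^2\geq 0$ there. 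For such a $C$ one has $S(\By)=\Bv(\By)\Bv(\By)^T$ with $\Bv$ linear in $\By$, the spectrum of $M$ is $\{0,0,\Bv^TT^{-1}\Bv\}$, and your completed case (all eigenvalues equal) collapses to $\Bv\equiv 0$, i.e.\ to the trivial situation $C\equiv 0$. Everything that must actually be proved sits in the branch you defer: excluding $\Bv^T\adj(T(\By))\Bv=c\det T(\By)$ with $c\in(0,1]$, equivalently excluding a nonzero PSD polynomial matrix $T-\Bv\Bv^T$ with identically vanishing determinant sitting below $T$.

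In that branch your sketch, as stated, does not go through. From $\det A\equiv\det B\equiv 0$ and positivity one cannot ``peel off'' a rank-one summand $\Bq\Bq^T$ with \emph{polynomial} (let alone linear) $\Bq$: a PSD polynomial matrix of rank one everywhere need not factor as $\Bq\Bq^T$ with polynomial entries, and this is exactly the obstruction you name but do not overcome. Worse, even if the iteration produced a factorization $T=WW^T$ with merely continuous or semialgebraic $W$, the conclusion $\det T=(\det W)^2$ would not contradict the hypothesis: a nonnegative sextic can be the square of a continuous function without being the square of a polynomial, e.g.\ $(y_1^2+y_2^2+y_3^2)^3$. So the contradiction must come from a further, essential use of extremality of $\det T$ inside this branch, and that is precisely the missing idea; your own closing sentence concedes it. For calibration: the present paper does not reprove Theorem~\ref{th:1.1} at all (it is quoted from the Harutyunyan--Milton CPAM paper), and the argument it does give for the analogous orthotropic statement, Theorem~\ref{th:3.1}, avoids your spectral reduction entirely, instead expanding $\det(T-\lambda T^1)$ explicitly in the symmetry-adapted coordinates, using Lemma~\ref{lem:4.1} to trap the $\lambda$-coefficients between $0$ and $3\det T$, and then running a coefficient-by-coefficient case analysis on the monomials of the sextic. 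Some such hands-on mechanism (or the corresponding argument in the cited CPAM paper) is what your proposal would need to import to close the gap; as written, the proof is incomplete.
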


Theorems~2.6 and 2.7 in [\ref{bib:Har.Mil.3}] study the case when $\det(T(\By))$ is a perfect square\footnote{It is not difficult to prove that the square of a third order homogeneous polynomial in three variables is en extremal polynomial.}, including the case when it is identically zero, proving that then the form $Q$ must be an an extremal or a polyconvex form, or a sum of those with some additional specific properties. In the present work, in contrast to the works [\ref{bib:Har.Mil.2},\ref{bib:Har.Mil.3}], we will not impose any additional constraint on $\det(T(\By))$ other than extremality. We will solely work with quadratic forms with orthotropic symmetry due to the interest in elasticity, however our analysis applies to other classes of quasiconvex quadratic forms too as it will become clear later in Section~\ref{sec:4}. In the meantime we will study a deeper problem here, namely not the extremals in the sense of Milton, that lose the quasiconvexity property when a convex form is subtracted from them, but the extreme points of the convex cone of the appropriate class of quasiconvex quadratic forms in the classical convex analysis sense. We will prove that the same link found in [\ref{bib:Har.Mil.2}] between quadratic forms and their $\By-$matrix determinant is still present, see Theorem~\ref{th:3.1}.

Another related problem concerning sixth order homogeneous polynomials in three variables and determinants of $\By-$matrices of quadratic forms, namely whether or not any such polynomial, in particular the well known Robinson's polynomial is a determinant of a $\By-$matrix, is open as well [\ref{bib:Reznick},\ref{bib:Quarez}]. In [\ref{bib:Buc.Siv.}] the authors construct the first examples of nonnegative biquadratic forms with a tensor in $\mathbb (R^3)^4,$ that have maximal number of nontrivial zeroes, namely ten of them. Note that the later can be regarded as quasocinvex quadratic forms that are extreme points of the appropriate cone as their $\By-$matrix determinants are scalar multiples of the generalized Robinson's polynomial [\ref{bib:Buc.Siv.},\ref{bib:Reznick}]. In conclusion we mention that while in the papers [\ref{bib:Reznick},\ref{bib:Quarez},\ref{bib:Buc.Siv.},\ref{bib:Cho.Kye.Lee.},\ref{bib:Li.Wu.},\ref{bib:Hou.Li.Poo.Qi.Sze.}], and in the references therein the approach to the problem of extremals and extreme points is algebraic and algebraic geometric (in some of them the authors use the language of positive linear maps), we regard it in the context of applied mathematics and adopt a somewhat simple and elementary approach to it.\\

\section{Quadratic forms with orthotropic symmetry}
\setcounter{equation}{0}
\label{sec:2}

In this section we recall the definition of orthotropic materials, i.e., quadratic forms that have orthotropic symmetry in linear elasticity. A homogeneous orthotropic elastic material has three mutually orthogonal planes such that the material properties are symmetric under reflection about each plane. If cartesian coordinate axes are chosen orthogonal to these planes, then the properties are invariant under the transformations $x_a \to -x_a$, $x_b \to x_b$, and $x_c\to x_c$, where $abc$ is a permutation of $123$. Recall that in linear elasticity the stress tensor $\BGs$ is related to the strain vector $\BGe$ linearly via a forth order constant tensor (the elasticity tensor) $\BC:$

\begin{equation}
\label{2.1}
\BGs=\BC\cdot\BGe.
\end{equation}
Due to the symmetry of the strain tensor, the relation (\ref{2.1}) is then reduced to a similar one with $\BC$ being a six by six matrix and $\BGs$ and $\BGe$ being $6$-vectors (Voigt notation). In the case of orthotropic materials, the elements of the elasticity tensor such as $C_{abcc}$ and $C_{abbb},$ change sign under a reflection about a symmetry plane mentioned above, thus those must be zero. Thus the elements $C_{ijk\ell}$ of the elasticity tensor must be zero unless the indices $ijk\ell$ contain an even number of repetitions of the indices $1$, $2$ or $3$. The Voigt notation takes the form 

\begin{equation}
\label{2.2}
\BGs=\BC\cdot\BGe,\quad\text{where}\quad
\BGs=\begin{bmatrix} \sigma_{11} \\ \sigma_{22} \\ \sigma_{33} \\ \sigma_{23} \\ \sigma_{31} \\ \sigma_{12} \end{bmatrix}, \quad
\BGe=\begin{bmatrix} \epsilon_{11} \\ \epsilon_{22} \\ \epsilon_{33} \\ 2\epsilon_{23} \\ 2\epsilon_{31} \\ 2\epsilon_{12} \end{bmatrix}, \quad
\BC=
\begin{bmatrix}
C_{11} & C_{12} & C_{13} & 0 & 0 & 0\\
C_{12} & C_{22} & C_{23} & 0 & 0 & 0\\
C_{13} & C_{23} & C_{33} & 0 & 0 & 0\\
0 & 0 & 0 & C_{44} & 0 & 0\\
0 & 0 & 0 & 0 & C_{55} & 0\\
0 & 0 & 0 & 0 & 0 & C_{66}
\end{bmatrix}.
\end{equation}
The related quadratic form then has the form 

\begin{align}
\label{2.3}
Q(\bm{\xi})&=\sum_{i,j=1}^3 C_{ij}\epsilon_{ii}\epsilon_{jj}+4C_{44}\epsilon_{23}^2+4C_{55}\epsilon_{31}^2+4C_{66}\epsilon_{12}^2\\ \nonumber
&=\sum_{i,j=1}^3 C_{ij}\xi_{ii}\xi_{jj}+C_{44}(\xi_{23}+\xi_{32})^2+C_{55}(\xi_{31}+\xi_{13})^2+C_{66}(\xi_{12}+\xi_{21})^2,
\end{align}
where the variable $\xi_{ij}$ plays the role of the $ij-$th entry of the displacement gradient $\BGx=\nabla \Bu=(\frac{\partial u_i}{\partial x_j}),$ $i,j=1,2,3.$ Thus 
we have for the entries of the strain matrix $\epsilon_{ij}=\frac{1}{2}(\xi_{ij}+{\xi_{ji}}),$ and (\ref{2.3}) follows. The mechanical properties are in general different along each axis. Orthotropic materials require 9 elastic constants and have as subclasses isotropic materials (with 2 elastic constants), cubic materials (with 3 elastic constants), and transversely isotropic materials (with 5 elastic constants). The wood in a tree trunk is an example of a material which is locally orthotropic: the material properties in three perpendicular
directions, axial, radial, and circumferential, are different. Many crystals and rolled metals are also examples of orthotropic materials.

\section{Main Results}
\setcounter{equation}{0}
\label{sec:3}
the below theorem is the main result of the manuscript. 
\begin{theorem}
\label{th:3.1}
Denote the convex cone of $3\times 3$ quasiconvex quadratic forms with linear elastic orthotropic symmetry by $\cal{C}.$ Assume $Q(\bm{\xi})\in \cal{C},$ where $Q$ having the form shown in (\ref{2.3}):

\begin{equation} 
\label{3.1}
Q(\bm{\xi})=\sum_{i,j=1}^3C_{ij}\xi_{ii}\xi_{jj}+C_{44}(\xi_{23}+\xi_{32})^2+C_{55}(\xi_{13}+\xi_{31})^2+C_{66}(\xi_{12}+\xi_{21})^2,
\end{equation}
satisfies the strict inequalities\footnote{Note that due to the quasiconvexity of $Q,$ the bounds $C_{ii}\geq 0$ for $i=1,2,\dots,6$ must be fulfilled.} 

\begin{equation}
\label{3.2}
C_{ii}>0,\quad\text{ for} \quad i=1,2,\dots,6. 
\end{equation}
If the determinant of the acoustic tensor of $Q$ is an extremal polynomial, then $Q$ is an extreme point of $\cal{C}.$ 
\end{theorem}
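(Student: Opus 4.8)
The plan is to pass from $Q$ to its acoustic matrix, reduce the statement to one about the \emph{determinant} of that matrix via Minkowski's determinant inequality, and then reconstruct $Q$ from $\det T$ using the rigidity that the orthotropic structure (together with the strict bounds $C_{ii}>0$) imposes on the possible factorizations. Concretely, writing $Q(\Bx\otimes\By)=\Bx\, T(\By)\,\Bx^{t}$ for the form (3.1) one gets
\[
T(\By)=\begin{bmatrix}
C_{11}y_1^2+C_{66}y_2^2+C_{55}y_3^2 & (C_{12}+C_{66})y_1y_2 & (C_{13}+C_{55})y_1y_3\\[2pt]
(C_{12}+C_{66})y_1y_2 & C_{66}y_1^2+C_{22}y_2^2+C_{44}y_3^2 & (C_{23}+C_{44})y_2y_3\\[2pt]
(C_{13}+C_{55})y_1y_3 & (C_{23}+C_{44})y_2y_3 & C_{55}y_1^2+C_{44}y_2^2+C_{33}y_3^2
\end{bmatrix}.
\]
The assignment $Q\mapsto T$ is linear on the orthotropic class, quasiconvexity of $Q$ is equivalent to $T(\By)\succeq0$ for all $\By$, and by (3.2) the diagonal of $T(\By)$ is positive definite, so $\det T(\By)\not\equiv0$. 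Suppose now $Q=Q_1+Q_2$ with $Q_1,Q_2\in\mathcal C$, with acoustic matrices $T_1,T_2\succeq0$ and $T_1+T_2=T$; I must show $Q_1=\lambda Q$ for some $\lambda\in[0,1]$.

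First, Minkowski's determinant inequality for $3\times3$ positive semidefinite matrices gives, pointwise in $\By$, $\det T=\det(T_1+T_2)\ge\det T_1+\det T_2\ge\det T_1\ge0$, so $\det T=\det T_1+(\det T-\det T_1)$ exhibits $\det T$ as a sum of two nonnegative sextic forms. Extremality of $\det T$ then forces $\det T_1=\lambda\det T$ with $\lambda\in[0,1]$, and symmetrically $\det T_2=\lambda'\det T$ with $\lambda+\lambda'\le1$. If $\lambda=0$ then $T_1(\By)$ is singular for every $\By$; using the adjugate identity $\det(T_1+T_2)=\det T_2+\mathrm{tr}(\adj(T_1)T_2)+\mathrm{tr}(\adj(T_2)T_1)$ and the fact that the adjugate of a $3\times3$ positive semidefinite matrix is positive semidefinite, one checks that $T_1\neq0$ would make $\det T-\det T_2$ a nonnegative sextic not proportional to $\det T$, contradicting extremality; hence $Q_1=0$, which is of the required form. (The sub-case $\lambda=\lambda'=0$, and the sub-case in which $T_1$ has rank $\le1$ everywhere — where $Q_1$ is the square of a linear combination of $\xi_{11},\xi_{22},\xi_{33}$ — are handled by the same identities, applying the matrix-determinant lemma directly in the latter.)

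In the main case $\lambda>0$ one recovers $Q_1$ constant by constant. Restricting $\det T_1=\lambda\det T$ to the coordinate plane $\{y_3=0\}$, where $T$ block-diagonalizes,
\[
\det T(y_1,y_2,0)=\bigl(C_{55}y_1^2+C_{44}y_2^2\bigr)\Bigl[(C_{11}y_1^2+C_{66}y_2^2)(C_{66}y_1^2+C_{22}y_2^2)-(C_{12}+C_{66})^2y_1^2y_2^2\Bigr],
\]
and likewise on $\{y_1=0\}$ and $\{y_2=0\}$. Comparing the unique factorizations of these binary sextics — the factor $C_{55}y_1^2+C_{44}y_2^2$ is $\mathbb R$-irreducible because $C_{44},C_{55}>0$, and no power of $y_1$ or $y_2$ can occur since all $C_{ii}>0$, so in the generic case that the quartic bracket is $\mathbb R$-irreducible the matching of factors is forced — yields: $C_{44},C_{55},C_{66}$ of $Q_1$ equal $\mu$ times those of $Q$; $C_{11},C_{22},C_{33}$ of $Q_1$ equal $\mu\nu$ times those of $Q$; $(C_{12}+C_{66})^2,(C_{13}+C_{55})^2,(C_{23}+C_{44})^2$ of $Q_1$ are explicit functions of $\mu,\nu$ and the data of $Q$; and, reading off the pure sixth powers, $\lambda=\mu^3\nu$. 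One coefficient of $\det T_1=\lambda\det T$ is still unused, that of $y_1^2y_2^2y_3^2$; substituting the parameters just found reduces it to a single scalar relation among $\mu,\nu$ and $\mathrm{sign}\bigl((C_{12}+C_{66})(C_{13}+C_{55})(C_{23}+C_{44})\bigr)$, and combining this with the positive semidefiniteness of $T_1$ (equivalently $T_2$) at an interior direction such as $(1,1,1)$, and with the sign constraint that $T_1\succeq0$ imposes on the off-diagonal products, forces $\nu=1$. Then every elastic constant of $Q_1$ equals $\mu$ times the corresponding one of $Q$, i.e.\ $Q_1=\mu Q$, and $Q$ is an extreme point of $\mathcal C$.

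The main obstacle is the endgame of this last case: identifying the correct scalar equation in $\nu$ and verifying that the inequalities inherited from $Q\in\mathcal C$ leave $\nu=1$ as the only admissible value. Running alongside it are the non-generic configurations — when a quartic bracket above factors over $\mathbb R$ into distinct irreducible quadratics, so the sextic factorizations admit several matchings — as well as the degenerate situations in the $\lambda=0$ analysis; in these one may have to reinvoke the full force of the extremality of $\det T$ rather than only the relation $\det T_1=\lambda\det T$ already extracted from it.
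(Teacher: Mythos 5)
Your reduction extracts too little from the extremality hypothesis, and the step you rely on to reconstruct $Q_1$ does not hold as stated. First, the ``generic case that the quartic bracket is $\mathbb{R}$-irreducible'' is vacuous: a real binary form of degree $\geq 3$ is never irreducible over $\mathbb{R}$, and your bracket, being a nonnegative binary quartic, always splits into two nonnegative real quadratic factors. The matching you need (the shear factor of $T_1$ proportional to $C_{55}y_1^2+C_{44}y_2^2$) is therefore never forced by unique factorization alone; whenever the bracket happens to split into quadratics free of the cross term $y_1y_2$ (e.g.\ when $|C_{12}+C_{66}|\leq|\sqrt{C_{11}C_{22}}-C_{66}|$, and in particular in the borderline configurations $|a_{12}|=|b-\sqrt{a_{11}a_{22}}|$ that actually occur for extremal orthotropic forms, cf.\ (\ref{4.24})), the shear factor of $T_1$ could just as well match a bracket factor, and your parametrization $C^1_{44}=\mu C_{44},\dots,C^1_{11}=\mu\nu C_{11},\dots$ collapses. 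These are not exceptional configurations to be patched later; they are the heart of the matter. Likewise, the ``single scalar relation forcing $\nu=1$'' that you defer is precisely where the paper does its real work: after establishing proportionality of coefficients it still takes the whole of Case~1 (identities (\ref{4.19})--(\ref{4.29})) to show that a nonproportional $Q_1$ would have to be a sum of squares, and the contradiction is then obtained not algebraically but by invoking the Milton-extremality of $Q$ from [\ref{bib:Har.Mil.2}, Theorem~5.1], which in turn needs Remark~\ref{rem:3.2} (under (\ref{3.2}) the determinant cannot be a perfect square). Your sketch contains no substitute for this mechanism.

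Second, the information you draw from extremality --- $\det T_1=\lambda\det T$ and $\det T_2=\lambda'\det T$ --- is strictly weaker than what the paper's argument requires. The paper proves Lemma~\ref{lem:4.1}, an interlacing inequality valid whenever $0\leq T_1\leq T$, which sandwiches the two mixed cofactor sums $\sum_{i,j}t_{ij}(\cof(T_1))_{ij}$ and $\sum_{i,j}t^1_{ij}(\cof(T))_{ij}$ between $3\det T_1$ and $3\det T$; extremality then makes \emph{all} coefficients of $\det(T-\lambda T_1)$ multiples of $\det T$, yielding the identity (\ref{4.14}) in the full variable $\lambda$, and it is this identity (coefficients of $y_i^6$, $y_i^4y_j^2$, $y_1^2y_2^2y_3^2$ as polynomials in $\lambda$) that pins down $Q_1$. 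Without it, the equation $\det T_1=\lambda\det T$ alone leaves the reconstruction underdetermined, which is why your endgame stalls. Finally, your $\lambda=0$ branch is internally inconsistent: once extremality gives $\det T_2=\lambda'\det T$, the difference $\det T-\det T_2=(1-\lambda')\det T$ is automatically proportional to $\det T$, so no contradiction of the form ``a nonnegative sextic not proportional to $\det T$'' can be manufactured from $T_1\neq 0$ via the adjugate identity; excluding a nonzero degenerate $Q_1$ (for instance the square of a linear form in $\xi_{11},\xi_{22},\xi_{33}$, which is a legitimate member of $\cal{C}$) again requires the convexity-versus-Milton-extremality argument used in the paper, not positivity of adjugates.
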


\begin{remark}
\label{rem:3.2}
It has been shown in [\ref{bib:Har.Mil.2}, Theorem~5.1], that assuming $C_{11},C_{22},C_{33}>0$ and that the determinant of the acoustic tensor of $Q$ is an extremal polynomial that is not a perfect square, then $Q$ is an extremal in the sense of Milton, i.e., it loses the quasiconvexity property when a rank one form\footnote{A rank-one form is the square of a linear form.} is subtracted from it. We claim that given the additional conditions $C_{44},C_{55},C_{66}>0,$ i.e., the condition (\ref{3.2}), the mentioned determinant automatically can not be a perfect square. Indeed, the form of $\det(T(\By))$ in (\ref{4.16}) suggests that if it is a perfect square, then it has the form $(a_1y_1^3+a_2y_2^3+a_3y_3^3+P(\By))^2,$ where $P(\By)$ is a homogeneous polynomial of degree $3$ in the variable $\By,$ free of any of the monomials $y_1^3,y_2^3,$ and $y_3^3.$ If then a monomial $y_i^2y_j$ with $i\neq j$ occurs in $P(\By),$ then the monomial $y_i^5y_j$ will occur in the determinant, which is not the case due to (\ref{4.16}). Thus $P(\By)$ may only involve $y_1y_2y_3,$ which results in the determinant involving $y_1^4y_2y_3,$ which is again not possible. This being said, due to [\ref{bib:Har.Mil.2}, Theorem~5.1], we automatically obtain that under the assumptions of Theorem~\ref{th:3.1}, the quadratic form $Q(\bm{\xi})$ is also an extremal in the sense of Milton. 
\end{remark}

We also mention that the works [\ref{bib:Cho.Kye.Lee.}] and [\ref{bib:Li.Wu.}] study positivity (quasdiconvexity) and inseparability (extremality) of speacial type of biquadratic forms. In particular
the authors in [\ref{bib:Cho.Kye.Lee.}] charactarize orthotropic quadratic (biquadratic) forms for which the coefficients of the off-diagonal terms $y_iy_j$, $i\neq j$ of $T(\By)$ are $-1$, with the on-diagonal terms having some special form and depending on $3$ parameters only. This is all done of course for the case $N=n=3.$ The work [\ref{bib:Li.Wu.}] treats again some very special type of biquadratic forms in the case $N=n$ and studies them for positive semi-definiteness and in-decomposability (extremality). However, both works [\ref{bib:Cho.Kye.Lee.},\ref{bib:Li.Wu.}] have very little intersection with ours.

\section{Proof of the main result}
\setcounter{equation}{0}
\label{sec:4}

\subsection{A lemma from linear algebra}

Here we prove a lemma from linear algebra, that will be a key factor in the proof of the main results. Although the conclusion of the lemma will be needed only in 3D, we will formulate and prove it for a general $n\in\mathbb N$ as it may be of separate interest. 

\begin{lemma}
\label{lem:4.1}
Let $n\in\mathbb N$ be such that $n\geq 2.$ Assume $\BA,\BB\in \mathbb M_{sym}^{n\times n}$ be symmetric positive semi-definite matrices such that $\BA\geq \BB$ in the sense of quadratic forms. Then for any integers $1\leq k< m\leq n$ one has the inequality 

\begin{equation}
\label{4.1}
\frac{1}{{n \choose m}}\sum_{M_m(\BB)}M_m(\BB)\mathrm{cof}_{\BA}(M_m(\BB))\leq \frac{1}{{n \choose k}}\sum_{M_k(\BB)}M_k(\BB)\mathrm{cof}_{\BA}(M_k(\BB)),
\end{equation}
where the number ${n \choose m}$ is the binomial coefficient, and the sum $\sum_{M_m(\BB)}$ is taken over all $m-$th order minors $M_m(\BB)$ of $\BB,$ and $\mathrm{cof}_{\BA}(M_m(\BB))$ denotes the cofactor of the minor in the matrix $\BA,$ obtained by choosing the same rows and columns as to get the 
minor $M_m(\BB)$ in $\BB.$ 
\end{lemma}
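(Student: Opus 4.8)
### Proof proposal

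The plan is to reduce the general statement to the single-step inequality between consecutive orders, i.e. to prove that for every $1\le k<n$ one has
\[
\frac{1}{\binom{n}{k+1}}\sum_{M_{k+1}(\BB)}M_{k+1}(\BB)\,\mathrm{cof}_{\BA}(M_{k+1}(\BB))\le \frac{1}{\binom{n}{k}}\sum_{M_{k}(\BB)}M_{k}(\BB)\,\mathrm{cof}_{\BA}(M_{k}(\BB)),
\]
and then chain these inequalities from $k$ up to $m$. So the core is the case $m=k+1$. For this I would first rewrite each of the two sums in a more invariant way. The quantity $\sum_{M_k(\BB)}M_k(\BB)\,\mathrm{cof}_{\BA}(M_k(\BB))$ is, up to the combinatorial normalization, the mixed characteristic-type expression obtained by polarizing $\det$; concretely, if we write $\BA = \BB + \BC$ with $\BC=\BA-\BB\ge 0$, then $\sum_{M_k(\BB)}M_k(\BB)\,\mathrm{cof}_{\BA}(M_k(\BB))$ is the coefficient structure appearing in the expansion of $\det(\BB + t\,\BC)$ — more precisely it equals a constant times the $(n-k)$-th elementary mixed invariant of $\BB$ and $\BA$. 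I would make this identification precise using the Cauchy–Binet / Laplace expansion, so that both sides of the desired inequality become explicit symmetric functions of the joint eigenvalues of the pencil $(\BB,\BA)$.

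Once everything is written in terms of the generalized eigenvalues, the inequality should become a statement about elementary symmetric functions (a Newton/Maclaurin-type inequality). The key reduction is simultaneous diagonalization: since $\BA>0$ on the relevant subspace (or by a perturbation argument replacing $\BA$ by $\BA+\eps I$ and letting $\eps\to 0^+$), there is a basis in which $\BA=I$ and $\BB=\mathrm{diag}(\lambda_1,\dots,\lambda_n)$ with all $\lambda_i\ge 0$, and the hypothesis $\BA\ge\BB$ forces $0\le\lambda_i\le 1$. In that basis $\mathrm{cof}_{\BA}$ of a principal $k\times k$ minor is just the complementary principal minor of $I$, which is $1$, so $\sum_{M_k(\BB)}M_k(\BB)\,\mathrm{cof}_{\BA}(M_k(\BB)) = e_k(\lambda_1,\dots,\lambda_n)$, the $k$-th elementary symmetric polynomial. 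The claimed inequality then reads
\[
\frac{e_{k+1}(\lambda)}{\binom{n}{k+1}}\le\frac{e_{k}(\lambda)}{\binom{n}{k}},
\]
which is exactly Maclaurin's inequality for the nonnegative reals $\lambda_1,\dots,\lambda_n$ (and it holds without even needing $\lambda_i\le 1$). I would cite or quickly prove Maclaurin's inequality via Newton's inequalities $p_k^2\ge p_{k-1}p_{k+1}$, where $p_k=e_k/\binom{n}{k}$, which follow from the fact that a real-rooted polynomial stays real-rooted under differentiation (Rolle). Then chaining gives the general $k<m$ case.

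The step I expect to be the main obstacle is the bookkeeping in the non-diagonal, rank-deficient regime: the cofactor $\mathrm{cof}_{\BA}(M_k(\BB))$ mixes rows/columns of $\BA$ with the index set selected from $\BB$, and I must be careful that the "simultaneous diagonalization" is legitimate when $\BA$ is only positive semidefinite (not definite) and when $\BB$ and $\BA$ do not commute. The clean way around this is the $\eps$-regularization: prove the inequality first for $\BA+\eps I>0$ (strictly), where simultaneous congruence-diagonalization of the pencil is unproblematic, obtain Maclaurin as above, and pass to the limit $\eps\to0^+$ since both sides are polynomials in the entries of $\BA,\BB$. A secondary technical point is checking that the normalization constants $\binom{n}{k}$ really are the right ones — this is just verifying that the number of principal $k$-subsets is $\binom{n}{k}$ and that the Laplace/Cauchy–Binet expansion of $\sum M_k(\BB)\mathrm{cof}_\BA(M_k(\BB))$ has no stray multiplicities — which I would dispatch with a short counting lemma before invoking Maclaurin.
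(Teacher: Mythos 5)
Your reduction runs parallel to the paper's own proof: the sums $S_k=\sum_{M_k(\BB)}M_k(\BB)\,\mathrm{cof}_{\BA}(M_k(\BB))$ are (up to sign) the coefficients of the pencil determinant $\det(\BA-t\BB)$, both sides of (\ref{4.1}) pick up the same positive factor $\det(S)^2$ under a congruence $\BM\mapsto S^T\BM S$, and after the $\eps$-regularization you may indeed take $\BA=\BI$ and $\BB=\mathrm{diag}(\lambda_1,\dots,\lambda_n)$ with $0\le\lambda_i\le1$, so that (\ref{4.1}) becomes $e_m(\lambda)/\binom{n}{m}\le e_k(\lambda)/\binom{n}{k}$ for $k<m$. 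The paper reaches the equivalent statement without diagonalizing, by observing that $\det(\BA-t\BB)$ has real roots $t_i\ge1$ and applying Vieta's formulas; the two reductions are interchangeable.

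The genuine gap is your final step. The inequality $e_{k+1}(\lambda)/\binom{n}{k+1}\le e_k(\lambda)/\binom{n}{k}$ is \emph{not} Maclaurin's inequality, and the parenthetical claim that it ``holds without even needing $\lambda_i\le1$'' is false: Maclaurin compares the $k$-th roots $\bigl(e_k/\binom{n}{k}\bigr)^{1/k}$, not the normalized symmetric functions themselves, and for $\lambda_1=\dots=\lambda_n=2$ one has $e_k/\binom{n}{k}=2^k$, strictly increasing in $k$. Likewise Newton's inequalities $p_{k-1}p_{k+1}\le p_k^2$ only give log-concavity of $p_k=e_k/\binom{n}{k}$; to start the monotonicity you need the base case $p_1\le p_0=1$, which is exactly the bound $\lambda_i\le1$, i.e.\ the hypothesis $\BA\ge\BB$ that you discarded. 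The correct finish is elementary and is precisely the paper's counting argument (stated there for the reciprocal situation $t_i\ge1$): since each $\lambda_i\le1$, every $m$-fold product $\lambda_{i_1}\cdots\lambda_{i_m}$ is bounded by each of its $k$-fold subproducts, hence by their average over the $\binom{m}{k}$ such subproducts; summing over all $m$-subsets and using that every $k$-subset extends to an $m$-subset in $\binom{n-k}{m-k}$ ways, together with the identity $\binom{n}{m}\binom{m}{k}=\binom{n}{k}\binom{n-k}{m-k}$, yields $e_m/\binom{n}{m}\le e_k/\binom{n}{k}$ directly for all $k<m$ (no chaining over consecutive orders is needed). With that step substituted for the appeal to Maclaurin, your argument is complete and essentially coincides with the paper's.
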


\begin{proof}
We may assume without loss of generality that $\BA$ and $\BB$ are positive definite as we can prove the statement for the matrices $\BA+\epsilon \BI$ 
and $\BB+\epsilon\BI,$ where the parameter $\epsilon$ is positive and then send $\epsilon$ to zero and recover the estimate (\ref{4.1}) for $\BA$ and $\BB.$ Consider next the polynomial 

\begin{equation}
\label{4.2}
P(t)=\det(\BA-t\BB), \qquad t\in \mathbb R.
\end{equation}
Denoting 

$$S_m=\sum_{M_m(\BB)}M_m(\BB)\mathrm{cof}_{\BA}(M_m(\BB)),\qquad m=0,1,2,\dots,n$$
we clearly have that 

\begin{equation}
\label{4.3}
P(t)=\sum_{m=0}^n (-1)^mS_mt^m,\qquad t\in \mathbb R.
\end{equation}
We have on one hand due to the fact $\det(\BB)>0,$ thus the square root $\BB^{1/2}$ and the half-inverse $\BB^{-1/2}$ exist, therefore we have

$$P(t)=\det(\BA-t\BB)=\det (\BB^{1/2}(\BB^{-1/2}\BA\BB^{-1/2}-t\BI)\BB^{1/2})=\det(\BB)\det(\BB^{-1/2}\BA\BB^{-1/2}-t\BI),$$
thus the roots of $P$ are real as $P$ is a scalar multiple of the characteristic polynomial of the symmetric matrix 
$\BB^{-1/2}\BA\BB^{-1/2}.$ On the other hand the inequalities $\BA\geq \BB$ and $\det(\BB)>0$ imply that $P(t)>0$ for $t\in (-\infty,1).$ Consequently we obtain that the roots of $P$ are real and belong to the interval $[1,\infty).$ Denoting them by $t_1,t_2,\dots, t_n$ we have 

$$P(t)=(-1)^n\det(\BB)(t-t_1)(t-t_2)\cdots(t-t_n),$$
which together with (\ref{4.3}) and Vieta's theorem gives the formulae

\begin{equation}
\label{4.4}
S_m=\det(\BB)\sum_{1\leq i_1<i_2<\dots<i_{n-m}\leq n}{t_{i_1}}t_{i_{2}}\cdots t_{i_{n-m}},\quad m=0,1,\dots,n,
\end{equation}
where by convention $S_0=\det(\BB).$ The formula (\ref{4.4}) now reduces (\ref{4.1}) to the inequality 
\begin{equation}
\label{4.5}
\frac{1}{{n \choose q}} \sum_{1\leq i_1<i_2<\dots<i_q\leq n} t_{i_1}t_{i_2}\cdots t_{i_q}\leq 
\frac{1}{{n \choose p}} \sum_{1\leq i_1<i_2<\dots<i_p\leq n} t_{i_1}t_{i_2}\cdots t_{i_p}, \quad \text{for all}\quad 0\leq q<p\leq n,
\end{equation}
where again in the case $q=0$ the whole expression is understood to be $1$. Assume $n\geq p>q\geq 1,$ the case $q=0$ being obvious by the bounds $t_i\geq 1.$ Utilizing the bounds $t_i\geq 1,$ we get for any fixed $p-$tuple 
$1\leq i_1<i_2<\cdots<i_p,$ the inequality 
$$
 t_{i_1}t_{i_2}\cdots t_{i_p} \geq	 \frac{1}{{p \choose q}} \sum_{\{j_1,j_2,\dots,j_q\}\subset \{i_1,i_2,\dots,i_p\} } t_{j_1}t_{j_2}\cdots t_{j_q},
$$
thus we discover the estimate 
\begin{equation}
\label{4.6}
\frac{1}{{n \choose p}} \sum_{1\leq i_1<i_2<\dots<i_p\leq n} t_{i_1}t_{i_2}\cdots t_{i_p}\geq 
\frac{1}{{p \choose q}{n \choose p}} \sum_{1\leq i_1<i_2<\dots<i_p\leq n} \sum_{\{j_1,j_2,\dots,j_q\}\subset \{i_1,i_2,\dots,i_p\} } t_{j_1}t_{j_2}\cdots t_{j_q}.
\end{equation}
On the other hand any fixed $q-$tuple can be completed to a $p-$tuple in ${n-q \choose p-q}$ different ways, which implies that 
\begin{equation}
\label{4.7} 
\frac{1}{{p \choose q}{n \choose p}} \sum_{1\leq i_1<i_2<\dots<i_p\leq n} \sum_{\{j_1,j_2,\dots,j_q\}\subset \{i_1,i_2,\dots,i_p\} } t_{j_1}t_{j_2}\cdots t_{j_q}=
\frac{{n-q \choose p-q}}{{p \choose q}{n \choose p}} \sum_{1\leq j_1<j_2<\dots<j_q\leq n} t_{j_1}t_{j_2}\cdots t_{j_q}.
\end{equation}
It remains to notice that $\frac{{n-q \choose p-q}}{{p \choose q}{n \choose p}}=\frac{1}{{n \choose q}}.$
\end{proof}

\subsection{Proof of Theorem~\ref{th:3.1}}

\begin{proof} Assume in contradiction that the assertion of Theorem~\ref{th:3.1} fails to hold, thus there exists a form $Q_1\in\cal{C}$ such that $Q_1\neq \alpha Q$ for any $\alpha\in[0,1],$ 
and also satisfying the inequalities 
\begin{equation}
\label{4.8}
0\leq Q_1(\Bx\otimes\By)\leq Q(\Bx\otimes\By),\quad\text{for all}\quad \Bx,\By\in\mathbb R^3.
\end{equation}
It is clear that the form $Q$ is equivalent modulo Null-Lagrangians to the quadratic form 
\begin{equation} 
\label{4.9}
\sum_{i,j=1}^3a_{ij}\xi_{ii}\xi_{jj}+b(\xi_{12}^2+\xi_{21}^2)+c(\xi_{13}^2+\xi_{31}^2)+d(\xi_{23}^2+\xi_{32}^2),
\end{equation}
where we have $a_{ii}=C_{ii}>0$ for $i=1,2,3$ and $b=C_{66}>0,c=C_{55}>0$ and $d=C_{44}>0.$ Therefore we will assume that $Q$ has the form in (\ref{4.9}) in the sequel.  Assume $Q(\Bx\otimes\By)=\Bx T(\By)\Bx^T$ and $Q_1(\Bx\otimes\By)=\Bx T^1(\By)\Bx^T,$ where 

\begin{equation} 
\label{4.10}
Q_1(\bm{\xi})=\sum_{i,j=1}^na_{ij}^1\xi_{ii}\xi_{jj}+b^1(\xi_{12}^2+\xi_{21}^2)+c^1(\xi_{13}^2+\xi_{31}^2)+d^1(\xi_{23}^2+\xi_{32}^2).
\end{equation}
The following polynomial in the variable $\lambda$ is going to be a key factor of the proof:

\begin{align}
\label{4.11}
P(\lambda)&=\det(T(\By)-\lambda T^1(\By))\\ \nonumber
&=\det(T(\By))-\lambda\sum_{i,j=1}^3t^1_{ij}(\By)(\cof(T(\By)))_{ij}+\lambda^2\sum_{i,j=1}^3t_{ij}(\By)(\cof(T^1(\By)))_{ij}-\lambda^3\det(T^1(\By)).
\end{align}
It is clear that due to Lemma~\ref{lem:4.1} we have the inequalities 

\begin{align}
\label{4.12}
0\leq 3\det(T_1(\By))\leq \sum_{i,j=1}^3t_{ij}&(\By)(\cof(T^1(\By)))_{ij}\leq \sum_{i,j=1}^3t^1_{ij}(\By)(\cof(T(\By)))_{ij}\leq 3\det(T(\By)),\\ \nonumber
&\text{for all} \qquad \By\in\mathbb R^3.
\end{align}
Therefore all the polynomials $3\det(T^1(\By))$, $\sum_{i,j=1}^3t_{ij}(\By)(\cof(T^1(\By)))_{ij}$ and  $\sum_{i,j=1}^3t^1_{ij}(\By)(\cof(T(\By)))_{ij},$ being between zero and the extremal polynomial $3\det(T(\By)),$ must be scalar multiples of $\det(T(\By)),$ i.e., we have 
\begin{align}
\label{4.13}
\det(T^1(\By))&=\alpha \det(T(\By)),\\ \nonumber
\sum_{i,j=1}^3t_{ij}(\By)(\cof(T^1(\By)))_{ij}&=\beta\det(T(\By)),\\ \nonumber
\sum_{i,j=1}^3t^1_{ij}(\By)(\cof(T(\By)))_{ij}&=\gamma \det(T(\By)),\\ \nonumber
\text{for some}\quad \alpha, \beta, \gamma &\geq 0.
\end{align}
Consequently we get from (\ref{4.11}) and (\ref{4.13}) the identity

\begin{equation}
\label{4.14}
\det(T(\By)-\lambda T^1(\By))=(1-\gamma\lambda+\beta\lambda^2-\alpha\lambda^3)\det(T(\By)),
\quad\text{for all}\quad \By\in \mathbb R^3, \ \lambda\in\mathbb R.
\end{equation}
Owing to the form (\ref{4.9}) of the quadratic for $Q(\bm{\xi}),$ we have that

\begin{equation}
\label{4.15} 
T(\By)=
\begin{bmatrix}
a_{11}y_1^2+by_2^2+cy_3^2 & a_{12}y_1y_2 & a_{13}y_1y_3\\[2ex]
a_{12}y_1y_2 & a_{22}y_2^2+by_1^2+dy_3^2 & a_{23}y_2y_3\\[2ex]
a_{13}y_1y_3 & a_{23}y_2y_3 & a_{33}y_3^2+cy_1^2+dy_2^2
\end{bmatrix}.
\end{equation}
Next we have by direct calculation (or using maple), that
\begin{align}
\label{4.16}
&\det(T(\By))\\ \nonumber
&=(a_{11}bc)y_1^6+(a_{22}bd)y_2^6+(a_{33}cd)y_3^6\\ \nonumber
&+(a_{11}bd+a_{11}a_{22}c+b^2c-a_{12}^2c)y_1^4y_2^2\\ \nonumber
&+(a_{22}bc+a_{11}a_{22}d+b^2d-a_{12}^2d)y_2^4y_1^2\\ \nonumber
&+(a_{11}cd+a_{11}a_{33}b+c^2b-a_{13}^2b)y_1^4y_3^2\\ \nonumber
&+(a_{33}bc+a_{11}a_{33}d+c^2d-a_{13}^2d)y_3^4y_1^2\\ \nonumber
&+(a_{22}cd+a_{22}a_{33}b+d^2b-a_{23}^2b)y_2^4y_3^2\\ \nonumber
&+(a_{33}bd+a_{22}a_{33}c+d^2c-a_{23}^2c)y_3^4y_2^2\\ \nonumber
&+(a_{11}a_{22}a_{33}+2a_{12}a_{13}a_{23}-a_{11}a_{23}^2-a_{22}a_{13}^2-a_{33}a_{12}^2+a_{11}d^2+a_{22}c^2+a_{33}b^2+2bcd)y_1^2y_2^2y_3^2.
\end{align}
Due to the strict inequalities (\ref{3.2}), we can denote $\frac{a_{ii}^1}{a_{ii}}=q_{ii},$ $i=1,2,3,$ and $\frac{b^1}{b}=q_b,$ $\frac{c^1}{c}=q_c$, $\frac{d^1}{d}=q_d.$ The identity (\ref{4.14}) tells us that the quotient of the coefficients of any monomials $y_1^{\alpha_1}y_2^{\alpha_2}y_3^{\alpha_3}$ and 
$y_1^{\beta_1}y_2^{\beta_2}y_3^{\beta_3}$ in $\det(T(\By)-\lambda T^1(\By))$ is exactly that of in $\det(T(\By)),$ thus considering the coefficients of $y_1^6,$ $y_2^6$ and $y_3^6$ we get the following set of identities:

\begin{align}
\label{4.17}
(1-q_{11}\lambda)(1-q_c\lambda)&=(1-q_{22}\lambda)(1-q_d\lambda),\\ \nonumber
(1-q_{11}\lambda)(1-q_b\lambda)&=(1-q_{33}\lambda)(1-q_d\lambda),\\ \nonumber
(1-q_{22}\lambda)(1-q_b\lambda)&=(1-q_{33}\lambda)(1-q_c\lambda),\\ \nonumber
\text{for all}\quad \lambda &\in\mathbb R.
\end{align}
The conditions in (\ref{4.17}) imply that the roots of the polynomials on the right and the left are the same, thus we get the set equalities
\begin{equation}
\label{4.18}
\{q_{11},q_c\}=\{q_{22},q_d\},\quad \{q_{11},q_b\}=\{q_{33},q_d\},\quad \{q_{22},q_b\}=\{q_{33},q_c\}.
\end{equation}
The are three main cases possible, the remaining ones being similar.\\
\textbf{Case1.} $q_{11}=q_{22}=q_{33}=s,$ and $q_b=q_c=q_d=t.$ \\
\textbf{Case2.} $q_{11}=q_{22}=q_{c}=q_{d}=s,$ and $q_{33}=q_b=t.$ \\
\textbf{Case3.} $q_{11}=q_d=s,$ $q_{22}=q_c=t,$ and $q_{33}=q_b=u.$ \\
The goal is to prove that $s=t$ in the first two cases and $s=t=u$ in the third case. We start with Case2 as Case1 is the most involved consisting of several sub-cases.\\
\textbf{Case2.}  as In this case the coefficient of $y_1^6$ in $\det(T(\By)-\lambda T^1(\By))$ is a scalar multiple of $(1- s\lambda)^2(1-t\lambda),$ thus also the coefficient of $y_2^4y_3^2$ in $\det(T(\By)-\lambda T^1(\By))$ has to have the same property. The first three summands in the coefficient of $y_2^4y_3^2$ in 
$\det(T(\By)-\lambda T^1(\By))$ have the factor $1-s\lambda,$ thus if we assume that $s\neq t,$ then the polynomial $(a_{23}-a_{23}^1\lambda)^2$ must have the factor $1-s\lambda,$ which implies that $a_{23}\neq 0$ and $q_{23}=\frac{a_{23}^1}{a_{23}}=s.$ Consequently all the three summands in the coefficient of $y_2^4y_3^2$ in $\det(T(\By)-\lambda T^1(\By))$
are multiples of $(1-s\lambda)^2$ except the second one, which is a scalar multiple of $(1-s\lambda)(1-t\lambda)^2,$ which again yields $s=t.$\\
\textbf{Case3.} In this case the coefficient of $y_1^6$ in $\det(T(\By)-\lambda T^1(\By))$ is a scalar multiple of $(1- s\lambda)(1-t\lambda)(1-u\lambda).$ Considering the coefficient of $y_3^4y_1^2$ in $\det(T(\By)-\lambda T^1(\By)),$ we have that the summands in it except the third one, which is $c^2d(1-t\lambda)^2(1-u\lambda),$ are divisible by
$(1-s\lambda),$ thus we obtain that either $t=s$ or $u=s.$ It is clear that both cases reduce to Case2, thus we finally end up with $s=t=u.$ \\
\textbf{Case1.} We aim to prove that again $t=s.$ Assume in contradiction that $t\neq s,$ which we will use in the subsequent calculations. Unlike the other two cases this one is somewhat more involved, consisting of several possible sub-cases. We first consider the coefficients of $y_1^6$ and $y_1^4y_2^2$ in both polynomials $\det(T(\By)-\lambda T^1(\By))$ and $\det(T(\By).$ 
Taking into account the condition (\ref{4.14}), we get the polynomial identity 
\begin{align*}
a_{11}bd(1-s\lambda)(1-&t\lambda)^2+a_{11}a_{22}c(1-s\lambda)^2(1-t\lambda)+b^2c(1-t\lambda)^3-c(1-t\lambda)(a_{12}-a_{12}^1\lambda)^2\\
&=(a_{11}bd+a_{11}a_{22}c+b^2c-a_{12}^2c)(1-s\lambda)(1-t\lambda)^2,\quad\text{for all}\quad\lambda\in (0,1),
\end{align*}
which gives the condition 
\begin{align*}
a_{11}a_{22}(t-s)(1-s\lambda)-b^2(t-s)(1-t\lambda)-a_{12}^2(s+t)-2a_{12}a_{12}^1+((a_{12}^1)^2-sta_{12}^2)\lambda=0,\\
\quad\text{for all}\quad\lambda\in (0,1).
\end{align*}
The last polynomial equality yields the two conditions:
\begin{align}
\label{4.19}
-a_{11}a_{22}s(t-s)+b^2t(t-s)-((a_{12}^1)^2-a_{12}^2st)=0,\\ \nonumber
a_{11}a_{22}(t-s)-b^2(t-s)-a_{12}^2(s+t)+2a_{12}a_{12}^1)=0.
\end{align}
Next we first multiply the second equality in (\ref{4.19}) by $s$ and add it to the first one, and then do the same with $t$ instead to $s$ to get the following two new equalities:
\begin{align}
\label{4.20}
b^2(t-s)^2=(a_{12}^1-sa_{12})^2,\\ \nonumber
a_{11}a_{22}(t-s)^2=(a_{12}^1-ta_{12})^2.
\end{align}
Similarly, by considering the coefficient of $y_1^6$ together with the coefficient of $y_1^4y_3^2$ and $y_2^4y_3^2$ respectively, we obtain four more similar identities:
\begin{align*}
c^2(t-s)^2=(a_{13}^1-sa_{13})^2,\\ 
a_{11}a_{33}(t-s)^2=(a_{13}^1-ta_{13})^2,\\
d^2(t-s)^2=(a_{23}^1-sa_{23})^2,\\ 
a_{22}a_{33}(t-s)^2=(a_{23}^1-ta_{23})^2,
\end{align*} 
thus we have the following set of six identities: 
\begin{align}
\label{4.21}
b^2(t-s)^2=(a_{12}^1-sa_{12})^2,\\ \nonumber
a_{11}a_{22}(t-s)^2=(a_{12}^1-ta_{12})^2,\\ \nonumber
c^2(t-s)^2=(a_{13}^1-sa_{13})^2,\\  \nonumber
a_{11}a_{33}(t-s)^2=(a_{13}^1-ta_{13})^2,\\ \nonumber
d^2(t-s)^2=(a_{23}^1-sa_{23})^2,\\  \nonumber
 a_{22}a_{33}(t-s)^2=(a_{23}^1-ta_{23})^2.
\end{align}
Next we treat the off diagonal entries $a_{ij}$ and $a_{ij}^1,$ ($i\neq j$) as unknowns in (\ref{4.21}) and solve each pair of identities for them, having 
three systems of two equations with two unknowns in each. We have three possibilities for each system of two equations, namely for instance for the first two equations in (\ref{4.21}) we have 
$$a_{11}a_{22}(a_{12}^1-sa_{12})^2=b^2(a_{12}^1-ta_{12})^2,$$ 
which splits into the two cases 
\begin{equation}
\label{4.22}
\sqrt{a_{11}a_{22}}(a_{12}^1-sa_{12})=\pm b(a_{12}^1-ta_{12}).
\end{equation}
The case with the minus sign on the right gives 
$$a_{12}^1=\frac{a_{12}(tb+s\sqrt{a_{11}a_{22}})}{b+\sqrt{a_{11}a_{22}}},$$ 
hence plugging this formula for $a_{12}^1$ back into one of the equations and taking into account the fact that $s-t\neq 0,$ we get the condition 
$$|a_{12}|=b+\sqrt{a_{11}a_{22}}.$$
This being said, the case when we have a negative sign on the right side of the equality in (\ref{4.22}) yields the solution
\begin{equation}
\label{4.23}
|a_{12}|=b+\sqrt{a_{11}a_{22}},\qquad a_{12}^1=\sigma_{12}(tb+s\sqrt{a_{11}a_{22}}),
\end{equation} 
where $\sigma_{ij}$ denotes the sign of $a_{ij}$ for $i\neq j.$ In the case when we have a positive sign on the right-hand side of the equality in (\ref{4.22}), we get 
$$a_{12}^1(b-\sqrt{a_{11}a_{22}})=a_{12}(tb-s\sqrt{a_{11}a_{22}}),$$
thus in the case $b=\sqrt{a_{11}a_{22}}$ we obtain $a_{12}=0.$ For $a_{12}^1$ we find from (\ref{4.19}) that $|a_{12}^1|=|b(t-s)|.$
Now in the case $b\neq \sqrt{a_{11}a_{22}}$ we clearly obtain the solution
$$
|a_{12}|=|b-\sqrt{a_{11}a_{22}}|,\qquad a_{12}^1=\frac{a_{12}(tb-s\sqrt{a_{11}a_{22}})}{b-\sqrt{a_{11}a_{22}}},
$$
thus combining the two possibilities we get 

\begin{equation}
\label{4.24}
\begin{cases}
|a_{12}|=|b-\sqrt{a_{11}a_{22}}|,\qquad a_{12}^1=\frac{a_{12}(tb-s\sqrt{a_{11}a_{22}})}{b-\sqrt{a_{11}a_{22}}} & if \quad b\neq \sqrt{a_{11}a_{22}},\\
a_{12}=0,\qquad |a_{12}^1|=|b(t-s)| & if \quad b=\sqrt{a_{11}a_{22}}.
\end{cases}
\end{equation} 
Next we note that in the case when we have the formula (\ref{4.23}), we get 
\begin{equation}
\label{4.25}
a_{12}-a_{12}^1\lambda=\pm (b(1-t\lambda)+\sqrt{a_{11}a_{22}}(1-s\lambda)),
\end{equation}
and in the case when we have (\ref{4.24}), we get 
\begin{equation}
\label{4.26}
a_{12}-a_{12}^1\lambda=\pm (b(1-t\lambda)-\sqrt{a_{11}a_{22}}(1-s\lambda)).
\end{equation}
The last step is the consideration of the coefficient of $y_1^2y_2^2y_3^2,$ which would then lead to a contradiction. First recall that $\det(T(\By))$ can not be a perfect square as understood in 
Remark~\ref{rem:3.2}. Assume now $s=0.$ In this case the bi-quadratic form $Q_1(\Bx\otimes\By )$ will be separated into parts depending only on pairs $(\xi_{ij},\xi_{ji}),$ $1\leq i<j\leq 3,$ thus 
the condition $Q_1(\Bx\otimes\By )\geq 0$ for all $\Bx,\By\in\mathbb R^3$ implies that $Q_1$ is convex. However, since $\det(T(\By))$ is not a perfect square, Theorem~5.1
in [\ref{bib:Har.Mil.2}] (see also [\ref{bib:Har.Mil.3}, Theorem~2.5]) implies that (\ref{4.9}) can not be fulfilled unless $Q_1=\alpha Q$ for some $\alpha\in[0,1],$ 
which would immediately finish the proof of Theorem~\ref{th:3.1}. Assume now $t=0.$ In this case too the inequality $Q_1(\Bx\otimes\By )\geq 0$ for all $\Bx,\By\in\mathbb R^3$ 
is equivalent to the fact that $Q_1$ is convex (as $Q_1$ depends only on the variables $\xi_{11},\xi_{22},\xi_{33}$), thus we are again done. 
We now have that $s,t>0$ and $s\neq t,$ thus the linear functions $X=1-s\lambda$ and $Y=1-t\lambda$ are non-constant and linearly independent.   
We now consider the coefficient (denoted by A(X,Y)) of $y_1^2y_2^2y_3^2$ in $\det(T(\By)-\lambda T^1(\By)),$ which according to (\ref{4.25}) and (\ref{4.26}) has the form 

\begin{align}
\label{4.28}
A(X,Y)&=a_{11}a_{22}a_{33}X^3 \pm 2(bY\pm\sqrt{a_{11}a_{22}}X)(cY\pm\sqrt{a_{11}a_{33}}X)(dY\pm\sqrt{a_{22}a_{33}}X)\\ \nonumber
&-a_{11}X (dY\pm\sqrt{a_{22}a_{33}}X)^2-a_{22}X (cY\pm\sqrt{a_{11}a_{33}}X)^2-a_{33}X (bY\pm\sqrt{a_{11}a_{22}}X)^2\\ \nonumber
&+(a_{11}d^2+a_{22}c^2+a_{33}b^2)XY^2+2bcdY^3.
\end{align}
On the other hand like the coefficient of $y_1^6,$ the polynomial $A(X,Y)$ must have the form $BXY^2.$ As $X$ and $Y$ are linearly independent non-constant linear functions of $\lambda,$ 
and the equality $A(X,Y)=BXY^2$ must be fulfilled for all $\lambda\in[0,1],$ we have to get $BXY^2$ algebraically after doing all the algebraic operations in (\ref{4.28}). 
This means that first of all we have to have a negative sign in front of $2(bY\pm\sqrt{a_{11}a_{22}}X)(cY\pm\sqrt{a_{11}a_{33}}X)(dY\pm\sqrt{a_{22}a_{33}}X)$ so that the coefficient of 
$Y^3$ vanishes. Next, by looking at the coefficient of $X^3$ in $A(X,Y),$ we get that there must be exactly one multiplier with "minus" and two with "plus" in the product 
$(bY\pm\sqrt{a_{11}a_{22}}X)(cY\pm\sqrt{a_{11}a_{33}}X)(dY\pm\sqrt{a_{22}a_{33}}X).$ Therefore, by symmetry, we can without loss of generality assume that we have 
\begin{align}
\label{4.29}
|a_{12}-a_{12}^1\lambda|&=|b(1-t\lambda)-\sqrt{a_{11}a_{22}}(1-s\lambda)|,\\ \nonumber
|a_{13}-a_{13}^1\lambda|&=|c(1-t\lambda)+\sqrt{a_{11}a_{33}}(1-s\lambda)|,\\ \nonumber
|a_{23}-a_{23}^1\lambda|&=|d(1-t\lambda)+\sqrt{a_{22}a_{33}}(1-s\lambda)|.
\end{align}
Taking into account the algebraic negative sign in front of $2(bY\pm\sqrt{a_{11}a_{22}}X)(cY\pm\sqrt{a_{11}a_{33}}X)(dY\pm\sqrt{a_{22}a_{33}}X)$ and (\ref{4.29}), 
we are left with the following three main possibilities (again due to symmetries):\\
\textbf{Case1a.} $a_{12}^1=tb-s\sqrt{a_{11}a_{22}},\quad a_{13}^1=-(tc+s\sqrt{a_{11}a_{33}}),\quad a_{23}^1=td+s\sqrt{a_{22}a_{33}}.$\\
\textbf{Case1b.} $a_{12}^1=-(tb-s\sqrt{a_{11}a_{22}}),\quad a_{13}^1=tc+s\sqrt{a_{11}a_{33}},\quad a_{23}^1=td+s\sqrt{a_{22}a_{33}}.$\\
\textbf{Case1c.} $a_{12}^1=-(tb-s\sqrt{a_{11}a_{22}}),\quad a_{13}^1=-(tc+s\sqrt{a_{11}a_{33}}),\quad a_{23}^1=-(td+s\sqrt{a_{22}a_{33}}).$\\
Note that in all three cases, the number of negative $\sqrt{a_{ii}a_{jj}}$,  $i\neq j$ is even, thus remembering the definition of $s$ and $t$ we obtain that $Q_1$ is a sum of squares, 
like for instance in Case1a we have 
$$Q_1(\xi)=s(-\sqrt{a_{11}}\xi_{11}+\sqrt{a_{22}}\xi_{22}+\sqrt{a_{33}}\xi_{3})^2+tb(\xi_{12}+\xi_{21})^2+tc(\xi_{12}-\xi_{21})^2+td(\xi_{12}+\xi_{21})^2.$$
However, this leads to a contradiction as again owing to [\ref{bib:Har.Mil.2}, Theorem~5.1], we have that the form $Q(\xi)$ is an extremal in the sense of Milton (recall that $\det{T(\By)}$ is not a perfect square), i.e., no convex form can be subtracted from it while preserving the quasiconvexity of it. This completes the proof of Theorem~\ref{th:3.1}.

\end{proof}

\begin{remark}
The tools of the proof of Theorem~\ref{th:3.1} are quite robust and it can be easily checked that quadratic forms having for instance the form 
\begin{equation}
\label{4.30}
Q(\bm{\xi})=\sum_{i,j=1}^3a_{ij}\xi_{ii}\xi_{jj}+b\xi_{12}^2+c\xi_{23}^2+d\xi_{31}^2 
\end{equation}
can also be treated by the same technique. It has been proven in [\ref{bib:Har.Mil.1}], that there is an extreme point of form (\ref{4.19}), namely the quadratic form 
\begin{equation}
\label{4.31}
F(\bm{\xi})=\xi_{11}^2+\xi_{22}^2+\xi_{33}^2+\xi_{12}^2+\xi_{23}^2+\xi_{31}^2-2(\xi_{11}\xi_{22}+\xi_{22}\xi_{33}+\xi_{33}\xi_{11}). 
\end{equation}
which is not only an extreme point of the cone of quadratic forms that have the same form (\ref{4.19}), but also it is an extreme point of the cone of all quasiconvex quadratic forms. Example (\ref{4.20}) is the first one as such in the literature. Also, quadratics forms having the more general form 
\begin{equation}
\label{4.32}
Q(\bm{\xi})=\sum_{i,j=1}^3a_{ij}\xi_{ii}\xi_{jj}+b_1\xi_{12}^2+c_1\xi_{23}^2+d_1\xi_{31}^2+b_2\xi_{21}^2+c_2\xi_{32}^2+d_2\xi_{13}^2 
\end{equation}
that involves orthotropic ones, seem to fit into the above analysis, but due to the bigger number of parameters involved, and thus possible cases to consider, we prefer not to present a detailed analysis here. 
\end{remark}

\section*{Acknowledgements}
We would like to thank one of the referees for spotting typos and pointing some relevant papers in the language of positive biquadratic forms,  
addressing the comments has substantially improved the presentation of the manuscript. We also thank the Isaac Newton Institute for Mathematical 
Sciences, Cambridge, UK for great hospitality, where part of the research in the manuscript has been carried out while the author was 
a visitor of a scientific program there.


\begin{thebibliography}{999}

\bibitem{2} G. Allaire and R.V. Kohn. Optimal lower bounds on the elastic energy of a composite made from two non-well-ordered isotropic materials,
             \textit{Quarterly of applied mathematics, } vol. LII, 311-333 (1994)
\label{bib:All.Koh.}

\bibitem{} E. Artin. \"Uber die Zerlegung definiter Funktionen in Quadrate, \textit{Abhandlungen aus dem Mathematischen Seminar der Universit\"at Hamburg.} 5 (1): 100--115, 1927.
\label{bib:Artin}

\bibitem{3}  J. M. Ball. Convexity conditions and existence theorems in nonlinear elasticity, \textit{Arch. Rational Mech. Anal,} 63, 337-403 (1976).
\label{bib:Ball}

\bibitem{} A. Buckley and K. \u{S}ivic. Nonnegative biquadratic forms with maximal number of zeros, \textit{preprint,} https://arxiv.org/pdf/1611.09513.pdf
\label{bib:Buc.Siv.}


\bibitem{5} A. Cherkaev. \textit{Variational Methods for Structural Optimization}, Springer Applied Mathematical Sciences, Vol. 140 (2000).
\label{bib:Cherkaev}


\bibitem{}  A. V. Cherkaev and L. V. Gibiansky. The exact coupled bounds for effective tensors of electrical and magnetic properties of two--component 
two--dimensional composites, \textit{Proceedings of the Royal Society of Edinburgh. Section A, Mathematical and Physical Sciences,} 122 (1992), pp. 93--125.
\label{bib:Che.Gib.1}


\bibitem{} A. V. Cherkaev and L. V. Gibiansky. Coupled estimates for the bulk and shear moduli of a two-dimensional isotropic elastic composite, \textit{Journal of the Mechanics and Physics of Solids,} 41 (1993), pp. 937--980.
\label{bib:Che.Gib.2}

\bibitem{} S. J. Cho. Generalized Choi maps in three-dimensional matrix algebra, \textit{Linear algebra and its applications,} 171: 213--224 (1992).
\label{bib:Cho.Kye.Lee.}

\bibitem{} M.-D. Choi. Positive semidefinite biquadratic forms, \textit{Linear algebra and its applications,} 12, 95--100 (1975).
\label{bib:Choi}

\bibitem{} M.-D. Choi and T.-Y. Lam.  Extremal positive semidefinite forms, \textit{Math. Ann.,} 231, 1--18 (1977).
\label{bib:Cho.Lam.}

\bibitem{6} B. Dacorogna. \textit{Direct methods in the calculus of variations.} Springer Applied Mathematical Sciences, Vol. 78, 2nd Edition (2008).
\label{bib:Dacorogna}

 \bibitem{12} D. Harutyunyan and G. W. Milton. Explicit examples of extremal quasiconvex quadratic forms that are not polyconvex, 
 \textit{Calculus of Variations and Partial Differential Equations} , October 2015, Vol. 54, Iss. 2, pp. 1575-1589. 
 \label{bib:Har.Mil.1}
 
 \bibitem{12} D. Harutyunyan and G. W. Milton. On the relation between extremal elasticity tensors with orthotropic symmetry and extremal polynomials, \textit{Archive for Rational Mechanics and Analysis}, Vol. 223, Iss. 1, pp 199-212, 2017. 
 \label{bib:Har.Mil.2}
 
 \bibitem{12} D. Harutyunyan and G. W. Milton. Towards characterization of all $3\times 3$ extremal quasiconevex quadratic forms, 
 \textit{Communications of Pure and Applied Mathematics}, Vol. 70, Iss. 11, Nov. 2017, pp. 2164-2190. 
 \label{bib:Har.Mil.3}
 
 \bibitem{} D. Hilbert. \"Uber die Darstellung definiter Formen als Summen von Formenquadraten,
\textit{Math. Ann.,} 32 (1888), 342--350. 
 \label{bib:Hilbert}
  
   
 \bibitem{12} J. Hou, Ch.-K. Li, Y.-T. Poon, X. Qi, and  N.-S. Sze. A new criterion and a special class of $k-$positive maps, \textit{Linear Algebra and its Applications} 470 (2015) 51-69.
 \label{bib:Hou.Li.Poo.Qi.Sze.}
 
  
 \bibitem{7}  H. Kang, E. Kim, and G. W. Milton.
Sharp bounds on the volume fractions of two materials in a two-dimensional body from electrical boundary measurements: the translation method,
\textit{Calculus of Variations and Partial Differential Equations,} 45, 367-401 (2012).
\label{bib:Kan.Kim.Mil.}

\bibitem{8}
H. Kang and G. W. Milton. Bounds on the volume fractions of two materials in a
three dimensional body from boundary measurements by the translation method,
\textit{SIAM Journal on Applied Mathematics,} 73, 475--492 (2013).
 \label{bib:Kan.Mil.}
 
\bibitem{9}
 H. Kang, G. W. Milton, and J.-N. Wang.  Bounds on the Volume Fraction of the Two-Phase Shallow Shell Using One Measurement,
\textit{Journal of Elasticity,} 114, 41-53 (2014).
\label{bib:Kan.Mil.Wan.}

\bibitem{} X. Li and W. Wu. A class of generalized positive linear maps on matrix algebras, \textit{Linear algebra and its applications} 439 (2013) 2844-2860. 
\label{bib:Li.Wu.}

\bibitem{11}
R. V. Kohn and R. Lipton. Optimal bounds for the effective energy of a mixture
of isotropic, incompressible, elastic materials,
\textit{Archive for Rational Mechanics and Analysis,} 102, 331--350 (1988).
\label{bib:Koh.Lip.}

 \bibitem{12} G. W. Milton. On characterizing the set of positive effective tensors of composites: The variational method and the translation method,
 \textit{Communications on Pure and Applied Mathematics,} Vol. XLIII, 63-125 (1990).
\label{bib:Milton.1}

 \bibitem{13} G. W. Milton. \textit{The Theory of Composites,} vol. 6 of Cambridge Monographs on Applied and Computational Mathematics, Cambridge University Press, Cambridge, United Kingdom, 2002
\label{bib:Milton.2}

 \bibitem{14} G. W. Milton. Sharp inequalities which generalize the divergence theorem: an extension of the notion of quasi-convexity, \textit{Proceedings Royal Society A} 469, 20130075 (2013).
 \label{bib:Milton.3}

\bibitem{15} G. W. Milton and L. H. Nguyen. Bounds on the volume fraction of 2-phase, 2-dimensional
elastic bodies and on (stress, strain) pairs in composites.
\textit{Comptes Rendus M\'ecanique,} 340, 193-204 (2012).
\label{bib:Mil.Ngu.}


 \bibitem{16} C. B. Morrey. Quasiconvexity and the lower semicontinuity of multiple integrals,
 \label{bib:Morrey.1}               \textit{Pacific Journal of Mathematics} 2, 25-53 (1952)

\bibitem{17} C. B. Morrey. \textit{Multiple integrals in the calculus of variations},
  \label{bib:Morrey.2}             Springer--Verlag, Berlin, 1966.

 \bibitem{18} F. Murat and L. Tartar. Calcul des variations et homog\'en\'isation. (French) [Calculus of variation and homogenization], in
            Les m\'ethodes de l'homog\'en\'eisation: th\'eorie et applications en physique, volume 57 of Collection de la Direction des
\'etudes et recherches d'Electricit\'e de France, pages 319-369, Paris, 1985, Eyrolles, English
translation in Topics in the Mathematical Modelling of Composite Materials, pages 139-173, ed. by A. Cherkaev and R. Kohn, ISBN 0-8176-3662-5.
\label{bib:Mur.Tar.}

\bibitem{} B. Reznick. On Hilbert's construction of positive polynomials, \textit{preprint,} available at: https://arxiv.org/pdf/0707.2156v1.pdf.
\label{bib:Reznick}

\bibitem{24} D. Serre. Condition de Legendre-Hadamard: Espaces de matrices de rang$\neq  1$.
               (French) [Legendre-Hadamard condition: Space of matrices of rank$\neq  1$], Comptes
Rendus de l'Acad\'emie des sciences, Paris 293, 23-26 (1981).
 \label{bib:Serre}  
 
 \bibitem{19} E. Stormer. Separable states and the structural physical approximation of a positive map, \textit{Journal of Functional Alanysis} 264 (2013) 2197-2205. 
 \label{bib:Stormer}
 
  
 \bibitem{19} L. Tartar. Compensated compactness and applications to partial differential equations, in Nonlinear Analysis and Mechanics, Heriot-Watt Symposium, Volume IV, edited by R. J. Knops, volume 39 of Research Notes in Mathematics, pages 136-212, London, 1979, Pitman Publishing Ltd.
 \label{bib:Tartar}  

\bibitem{} R. Quarez. On the real zeros of positive semidefinite biquadratic forms, \textit{Comm. Algebra,} 43 (2015), 1317--1353.
\label{bib:Quarez}

\bibitem{26} L. Van Hove. Sur l'extension de la condition de Legendre du calcul des variations aux
  \label{bib:VanHove.1}                 int\'egrales multiples \'a plusieurs functions inconnues, \textit{Nederl. Akad. Wetensch. Proc.}\textbf{50} (1947), 18-23.

\bibitem{27} L. Van Hove. Sur le signe de la variation seconde des int\'egrales multiples
  \label{bib:VanHove.2}                \'a plusieurs functions inconnues, \textit{Acad. Roy. Belgique Cl. Sci. M\'em. Coll.} \textbf{24} (1949), 68.

 \end{thebibliography}
\end{document}